\def\margin{3.9cm}
\title{Conflict-free coloring on open neighborhoods of claw-free graphs}
\author[1]{Sriram Bhyravarapu\thanks{This work was done when the author was a Ph.D. student at Department of CSE, IIT Hyderabad.}}
\author[2]{Subrahmanyam Kalyanasundaram}
\author[2]{Rogers Mathew}
\affil[1]
{
The Institute of Mathematical Sciences, HBNI, Chennai, India \authorcr
{\tt sriramb@imsc.res.in}
}
\affil[2]
{
Department of Computer Science and Engineering, \authorcr
Indian Institute of Technology Hyderabad, India - 502285. \authorcr {\tt \{subruk, rogers\}@cse.iith.ac.in}
}
\date{}
\theoremstyle{definition}
\theoremstyle{plain}
\newtheorem{theorem}{Theorem}
\newtheorem{lemma}[theorem]{Lemma}
\newtheorem{corollary}[theorem]{Corollary}
\newtheorem{observation}[theorem]{Observation}
\newtheorem{definition}[theorem]{Definition}
\theoremstyle{remark}
\begin{document}

\maketitle
\begin{abstract}
The \emph{Conflict-Free Open (Closed) Neighborhood coloring}, abbreviated \emph{CFON (CFCN) coloring}, of a graph $G$ using $r$ colors is a coloring of the vertices of $G$ such that every vertex sees some color exactly once in its open (closed) neighborhood. The minimum $r$ such that $G$ has a CFON (CFCN) coloring using $r$ colors is called the \emph{CFON chromatic number} (\emph{CFCN chromatic number}) of $G$. This is denoted by $\chi_{CF}^{ON}(G)$ ($\chi_{CF}^{CN}(G)$). D\k ebski and Przyby\l{}o in 
[J. Graph Theory, 2021] 
showed that if $G$ is a line graph with maximum degree $\Delta$, then $\chi_{CF}^{CN}(G) = O(\ln \Delta)$. As an open question, they
asked if the result could be extended to claw-free ($K_{1,3}$-free) graphs, which are a superclass of line graphs. 
For $k\geq 3$, we show that if $G$ is $K_{1,k}$-free, then $\chi_{CF}^{ON}(G) = O(k^2\ln \Delta)$. Since it is known that the CFCN chromatic number of a graph is at most twice its CFON chromatic number, this answers the question posed by  D\k{e}bski and Przyby\l{}o.    
\end{abstract}
\section{Introduction}
\begin{definition}[Conflict-free coloring]
Given a hypergraph $\mathcal{H}= (V, \mathcal{E})$, a coloring $c~:V\rightarrow [r]$ is a \emph{conflict-free coloring} of $\mathcal{H}$ if for every hyperedge $E \in \mathcal{E}$, there is a vertex in $E$ that receives a color under $c$ that is distinct from the colors received by all the other vertices in   $E$. The minimum $r$ such that $c~:V\rightarrow [r]$ is a conflict-free coloring of $\mathcal{H}$ is called the \emph{conflict-free chromatic number} of $\mathcal{H}$. This is denoted by $\chi_{CF}(\mathcal{H})$. 
\end{definition}
For a graph $G$, we shall use $V(G)$ and $E(G)$ to denote its vertex set and edge set, respectively. For a vertex $v$ in $G$, its \emph{open neighborhood}, denoted by $N_G(v)$, is defined as the set of neighbors of $v$ in $G$. The \emph{closed neighborhood} of $v$ is $N_G(v) \cup \{v\}$. 
\begin{definition}[CFON coloring]
A coloring $c:V(G) \rightarrow [r]$ of the vertices of $G$ is called a \emph{Conflict-Free Open Neighborhood} coloring (or CFON coloring) of $G$ if for every vertex in $G$, there is some color that appears exactly once in its open neighborhood. The minimum $r$ such that a coloring $c:V(G) \rightarrow [r]$ is a CFON coloring of $G$ is called the \emph{Conflict-Free Open Neighborhood chromatic number} (or CFON chromatic number) of $G$. This is denoted by $\chi_{CF}^{ON}(G)$.   
\end{definition}
\begin{definition}[CFCN coloring]
A coloring $c:V(G) \rightarrow [r]$ of the vertices of $G$ is called a \emph{Conflict-Free Closed  Neighborhood} coloring (or CFCN coloring) of $G$ if for every vertex in $G$, there is some color that appears exactly once in its closed neighborhood. The minimum $r$ such that a coloring $c:V(G) \rightarrow [r]$ is a CFCN coloring of $G$ is called the \emph{Conflict-Free Closed Neighborhood chromatic number} (or CFCN chromatic number) of $G$. This is denoted by $\chi_{CF}^{CN}(G)$.   
\end{definition}
Since its introduction by Even, Lotker, Ron and Smorodinsky in 2004 \cite{Even2002}, conflict-free coloring has been extensively studied \cite{smorosurvey, SSR_JGT, Pach2009, glebov2014conflict}. 
\begin{definition}(Claw-free graph)
The complete bipartite graph $K_{1,3}$ is called a \emph{claw}.  A graph is called a  \emph{claw-free graph} if it does not contain a claw as an induced subgraph. 
\end{definition}
In this paper, we also consider $K_{1,k}$-free graphs, which are similarly defined.
The following result is due to D\k ebski and Przyby\l{}o \cite{DebskiPrzyblo}.
\begin{theorem}[\cite{DebskiPrzyblo}]
\label{thm_Debski}
Let $G$ be a line graph with maximum degree $\Delta$. Then $\chi_{CF}^{CN}(G) = O(\ln \Delta)$. This bound is tight up to constants. 
\end{theorem}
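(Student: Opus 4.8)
The plan is to lean on the single structural consequence of $K_{1,k}$-freeness that is genuinely available: for every vertex $v$ the induced subgraph $G[N(v)]$ has no independent set of size $k$, i.e. $\alpha(G[N(v)]) \le k-1$. I would take this as the sole working hypothesis and try to turn it into a randomized colouring analysed by the Lov\'asz Local Lemma. Since degrees range up to $\Delta$ and a single ``flat'' random colouring can never satisfy a vertex whose degree far exceeds the palette, I would organise the colours into $L+1 = O(\log \Delta)$ \emph{scales}, letting each vertex occupy scale $j$ with probability proportional to $2^{-j}$, and attach to each scale a palette of $q = O(k^2)$ tints, for $O(k^2 \log \Delta)$ colours in total. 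The purpose of the geometric scale distribution is that for a vertex $v$ of degree $d$ the neighbours landing on the top occupied scale number $O(1)$ in expectation, so that after conditioning on the scale assignment the decisive competition takes place inside a small set of bounded independence number.

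For each $v$ I would then define the bad event $B_v$ that no colour appears exactly once in $N(v)$, and aim to finish by the symmetric Local Lemma. Since $B_v$ is determined by the scales and tints of the vertices in $N(v)$, it is mutually independent of all $B_w$ except those with $N(v)\cap N(w)\neq\emptyset$, i.e.\ those attached to vertices within distance two; the dependency degree is therefore $O(\Delta^2)$, and it suffices to force $\Pr[B_v] \le c\,\Delta^{-2}$ for a suitable constant.

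The hard part is exactly this probability bound, and it is where $\alpha(G[N(v)]) \le k-1$ must do real work. The obstacle I expect to be most delicate is that \emph{obliviously} random tints cannot beat a constant: even at the top scale there is an $\Omega(1/q)$ chance that the few competing neighbours collide in tint, so with a constant-size palette $\Pr[B_v]$ stays bounded below independently of the graph, and no amount of scaling alone reaches $\Delta^{-2}$. Breaking this barrier seems to require an adjacency-sensitive tint assignment rather than independent uniform tints, so that a failure of $v$ forces every occurring tint in $N(v)$ to be repeated; one then tries to charge each repeated tint to a pair of same-tint neighbours and to argue, using $\alpha(G[N(v)]) \le k-1$, that too many such repetitions cannot coexist without producing a forbidden independent set of size $k$. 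Making this trade-off quantitative — showing that $q = \Theta(k^2)$ tints per scale are simultaneously enough to separate the $O(1)$ top-scale competitors in the typical case and to absorb the rare heavy-scale coincidences, pushing $\Pr[B_v]$ below $\Delta^{-2}$ — is the crux of the argument and the step I would spend the most care on; the two appearances of the parameter $k$ (bounding how many neighbours can behave alike at the decisive scale, and bounding the palette needed to pull them apart) are what I would expect to combine into the $k^2$ factor.

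Once $\Pr[B_v] = o(\Delta^{-2})$ is established, the Local Lemma produces a colouring avoiding every $B_v$, that is, a CFON colouring with $O(k^2 \log \Delta)$ colours, giving $\chi_{CF}^{ON}(G) = O(k^2 \ln \Delta)$. Combined with the stated inequality $\chi_{CF}^{CN}(G) \le 2\,\chi_{CF}^{ON}(G)$, this also bounds the CFCN chromatic number and, specialised to $k=3$, answers the question of D\k ebski and Przyby\l o for claw-free graphs.
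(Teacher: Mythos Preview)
First, note that Theorem~\ref{thm_Debski} is quoted from~\cite{DebskiPrzyblo} and is not proved in this paper; your proposal is really a sketch for Theorem~\ref{thm_main}, which the paper does prove and which implies the line-graph statement via $K_{1,3}$-freeness of line graphs.

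Your proposal has a genuine gap at exactly the step you yourself flag as the crux. You correctly observe that independent uniform tints leave $\Pr[B_v]$ bounded away from zero, and you suggest an ``adjacency-sensitive tint assignment'' as the remedy --- but you never say what this assignment is or how its analysis would run; the heuristic of charging repeated tints to forbidden independent sets is not turned into an actual argument, and as stated there is no mechanism that drives $\Pr[B_v]$ down to $o(\Delta^{-2})$. The paper sidesteps this difficulty by a completely different device. It starts from a greedy proper $(\Delta+1)$-colouring arranged so that every vertex in class $C_i$ has a neighbour in each earlier class $C_j$; by $K_{1,k}$-freeness a vertex has at most $k-1$ neighbours in any single class (Observation~\ref{obv_degree_in_color_class}). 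Taking $r=O(\ln\Delta)$ and setting $V_1=C_1$, $V_2=C_2\cup\cdots\cup C_{r+1}$, $V_3=C_{r+2}\cup\cdots$, every vertex of $V_3$ then has between $r$ and $(k-1)r$ neighbours in $V_2$. This near-uniformity of hyperedge sizes is precisely what makes a \emph{single} flat random palette on $V_2$ succeed: Talagrand's inequality plus the Local Lemma (Lemma~\ref{lem_near_uniform_hypergraph}) give the required $\le \tfrac{1}{4\Gamma}$ failure probability per hyperedge without any multi-scale machinery. Vertices in $V_1$ and $V_2$ are handled by small deterministic colourings (Lemmas~\ref{lem_CFON_planar_trick} and~\ref{lem_CFON_hypergraph_degree}), and the three pieces are combined as a product colouring. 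The idea your outline is missing is exactly this use of the greedy ordering to manufacture controlled neighbourhood sizes, which replaces the unspecified ``adjacency-sensitive'' step.
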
 
The line graph of a complete graph was used to show the tightness of above upper bound.  Line graphs are a subclass of claw-free graphs. In \cite{DebskiPrzyblo}, it was asked whether the above result can be extended to claw-free graphs. The main contribution of this paper is to answer this question by extending the above result to claw-free graphs, by showing the following bound. 
\begin{theorem}
\label{thm_main}
Let $G$ be a $K_{1,k}$-free graph with maximum degree $\Delta$ having no isolated vertices. Then, $\chi_{CF}^{ON}(G) = O(k^2\ln\Delta)$.  
\end{theorem}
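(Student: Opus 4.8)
The plan is to produce a random coloring and show, via the Lov\'asz Local Lemma, that with positive probability it is a CFON coloring of $G$. The only structural fact I will exploit is that, since $G$ is $K_{1,k}$-free, the open neighborhood $N_G(v)$ of every vertex $v$ induces a subgraph of independence number at most $k-1$; equivalently, if $\{a_1,\dots,a_t\}$ with $t\le k-1$ is a maximal independent set inside $G[N_G(v)]$, then $N_G(v)\subseteq \bigcup_{j=1}^{t} \bigl(N_G(a_j)\cup\{a_j\}\bigr)$, so the neighborhood is covered by at most $k-1$ local ``clusters.'' First I would bucket the vertices by degree into the $L=O(\ln\Delta)$ classes $B_i=\{v: 2^i\le \deg(v)<2^{i+1}\}$ and give each class its own \emph{disjoint} palette. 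This decouples the scales: a color drawn only from class $i$'s palette is unique in $N_G(v)$ as soon as it is unique among the class-$i$-colored neighbors of $v$. If each class uses a palette of size $O(k^2)$, the total is $O(k^2\ln\Delta)$ colors, which is the target bound.

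Within a class $B_i$ (degrees $\approx d=2^i$) I would sample a set $S_i$ keeping each vertex independently with probability $p_i\approx 1/d$, color each sampled vertex with a uniform color from the size-$q$ class palette ($q=O(k^2)$), and leave everything else uncolored for this class. For $v\in B_i$ the count $|N_G(v)\cap S_i|$ concentrates around a constant, and I want the colors placed on $N_G(v)\cap S_i$ to contain a singleton. Define the bad event $A_v$ to be ``no class-$i$ color occurs exactly once in $N_G(v)$.'' Each $A_v$ is determined by the random choices made for vertices within distance $2$ of $v$, of which there are at most $\Delta^2$; hence $A_v$ is mutually independent of all but $\Delta^{O(1)}$ other bad events, and the symmetric Local Lemma will apply provided I can prove $\Pr[A_v]\le \Delta^{-c}$ for a suitable constant $c$.

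The crux, and the step I expect to be the main obstacle, is establishing this \emph{polynomially small} bound on $\Pr[A_v]$ while spending only $O(k^2)$ colors per scale. A uniform random coloring by itself is not enough in the delicate regime where the number of sampled neighbors is comparable to the palette size: there one obtains only a constant probability of producing a singleton, which is far too weak for the Local Lemma. This is exactly where claw-freeness must do the work. Using the cover $N_G(v)\subseteq\bigcup_{j\le t}\bigl(N_G(a_j)\cup\{a_j\}\bigr)$ with $t\le k-1$, I would reduce the task of creating a singleton in $N_G(v)$ to creating one inside a single cluster; this contributes one factor of $k$ (the number of clusters), while a second factor of $k$ is budgeted inside each cluster to force distinctness of the relevant colors, producing the $k\cdot k=k^2$ in the palette size. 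The technical heart is a concentration argument showing that, after sampling, the relevant colored portion of some cluster has the right (bounded) size except with small probability, and that on this event a random $O(k)$-coloring exhibits a uniquely colored vertex. Combining the two independent sources of randomness here, namely which scale is ``right'' for $v$ and the coloring inside the clusters, so that their joint failure probability is genuinely $\Delta^{-c}$ rather than merely a constant, is the part that requires the most care and is where the $K_{1,k}$-free hypothesis is indispensable.

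Finally, with $\Pr[A_v]\le\Delta^{-c}$ and dependency degree $\Delta^{O(1)}$, the Lov\'asz Local Lemma yields an outcome avoiding every $A_v$ simultaneously; reading off the colors over the $O(\ln\Delta)$ disjoint class palettes gives a CFON coloring using $O(k^2\ln\Delta)$ colors, as claimed. Vertices of very small degree, where $N_G(v)$ is already small enough to be colored so that some color appears once without any sampling, would be dispatched as easy base cases, and the ``no isolated vertices'' hypothesis is precisely what makes the conflict-free requirement meaningful at every vertex.
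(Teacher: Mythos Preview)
Your outline has a real gap at the step you yourself flag as the crux, and I do not see how the ingredients you list close it. First, bucketing by $\deg(v)$ does not control the quantity you need: for $v\in B_i$ you study $N_G(v)\cap S_i$, but nothing forces the neighbors of $v$ into $B_i$; in a $K_{1,k}$-free graph a vertex of degree $d$ can have all its neighbors of degree $\Theta(d/k)$ or $\Theta(\Delta)$, so $N_G(v)\cap B_i$ may well be empty, and no other bucket $j$ is guaranteed to give $|N_G(v)\cap B_j|\,p_j$ of constant order. Second, even in the regular case (one bucket), sampling at rate $p=1/d$ already gives $\Pr[N_G(v)\cap S=\emptyset]=(1-1/d)^d\approx e^{-1}$, so $\Pr[A_v]$ is bounded below by a positive constant no matter how many colors you use. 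The cluster cover $N_G(v)\subseteq\bigcup_{j\le k-1}N_G[a_j]$ is deterministic and has only $k-1$ parts; it can save at most a factor polynomial in $k$, not the $\Delta^{-c}$ required for the Local Lemma. The phrase ``which scale is right for $v$'' suggests independent rounds rather than degree buckets, but as written the scales are a deterministic partition and contribute no independent randomness.

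The paper sidesteps both issues with a different decomposition. It takes a greedy proper $(\Delta+1)$-coloring $C_1,\dots,C_{\Delta+1}$ arranged so that each vertex in $C_i$ has a neighbor in every earlier class; $K_{1,k}$-freeness is used not to cover $N_G(v)$ by clusters but to say that every vertex has at most $k-1$ neighbors in any single independent class $C_j$. Setting $r=\Theta(\ln\Delta)$ and $V_2=C_2\cup\dots\cup C_{r+1}$, every later vertex then has between $r$ and $(k-1)r$ neighbors in $V_2$, and one application of Talagrand's inequality plus the Local Lemma to this near-uniform neighborhood hypergraph (Lemma~\ref{lem_near_uniform_hypergraph}) yields a conflict-free coloring of $V_2$ with $O(k\ln\Delta)$ colors; the vertices in $V_1$ and $V_2$ themselves are handled by two short deterministic lemmas, and a product coloring combines the three pieces. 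The logarithm enters once, through the number of classes swept into $V_2$, rather than via a product of per-round failure probabilities.
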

It is known (see for instance, equation 1.3 from \cite{Pach2009}) that the CFCN chromatic number of a graph with no isolated vertices is at most twice its CFON chromatic number. 
If there are isolated vertices, we can give them a separate new color. 
We thus have the following corollaries.
\begin{corollary}
\label{cor_closed_neighborhood}
Let $G$ be an $K_{1,k}$-free graph with maximum degree $\Delta$. Then, $\chi_{CF}^{CN}(G) = O(k^2\ln\Delta)$. 
\end{corollary}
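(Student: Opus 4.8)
The plan is to obtain Corollary~\ref{cor_closed_neighborhood} as a direct consequence of Theorem~\ref{thm_main}, using the stated fact that $\chi_{CF}^{CN}(H) \le 2\,\chi_{CF}^{ON}(H)$ for any graph $H$ with no isolated vertices, and handling isolated vertices separately. The only genuine gap between the corollary and Theorem~\ref{thm_main} is that the corollary drops the assumption that $G$ is free of isolated vertices, so the bulk of the work is bookkeeping rather than new combinatorics.

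First I would split the vertex set as $V(G) = I \cup V'$, where $I$ is the set of isolated vertices and $V' = V(G) \setminus I$. Let $G' = G[V']$ be the subgraph induced on the non-isolated vertices. Since any induced subgraph of a $K_{1,k}$-free graph is itself $K_{1,k}$-free, $G'$ is $K_{1,k}$-free; moreover $G'$ has no isolated vertices and its maximum degree is at most $\Delta$. Applying Theorem~\ref{thm_main} to $G'$ then gives $\chi_{CF}^{ON}(G') = O(k^2 \ln \Delta)$, and the known inequality $\chi_{CF}^{CN}(G') \le 2\,\chi_{CF}^{ON}(G')$ yields $\chi_{CF}^{CN}(G') = O(k^2 \ln \Delta)$, with the factor of $2$ absorbed into the $O(\cdot)$.

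It then remains to extend a CFCN coloring $c'$ of $G'$ to all of $G$. The key observation is that an isolated vertex $v \in I$ has closed neighborhood $N_G(v) \cup \{v\} = \{v\}$, so whatever color $v$ receives trivially appears exactly once in its closed neighborhood; thus isolated vertices are automatically conflict-free and, unlike in the open-neighborhood setting, require no new color at all. Since vertices of $I$ are adjacent to nothing, they do not appear in the closed neighborhood of any vertex of $V'$, so reintroducing them does not disturb the conflict-free property of $c'$ on $G'$. Coloring each $v \in I$ arbitrarily (reusing any color already in use) therefore produces a valid CFCN coloring of $G$, giving $\chi_{CF}^{CN}(G) \le \chi_{CF}^{CN}(G') = O(k^2 \ln \Delta)$.

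There is essentially no hard step here, since all the difficulty is concentrated in Theorem~\ref{thm_main}, which I am assuming. The only point requiring a moment's care is the treatment of isolated vertices: in the closed-neighborhood model they are satisfied for free because their closed neighborhood is a singleton, in contrast to the open-neighborhood model, where an empty open neighborhood has no color occurring exactly once and would force special handling. Keeping this distinction straight is what ensures the corollary holds for $G$ with isolated vertices while only the no-isolated-vertices version of Theorem~\ref{thm_main} is invoked.
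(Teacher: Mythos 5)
Your proof is correct and follows essentially the same route as the paper: apply Theorem~\ref{thm_main} to the subgraph on non-isolated vertices, invoke the inequality $\chi_{CF}^{CN} \le 2\,\chi_{CF}^{ON}$, and handle isolated vertices separately. Your only deviation is a small refinement of the last step---you observe that isolated vertices can reuse an existing color since their closed neighborhood is a singleton, whereas the paper simply gives them one separate new color---which changes nothing asymptotically.
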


\begin{corollary}
\label{cor_claw_free}
Let $G$ be a claw-free ($K_{1,3}$-free) graph with no isolated vertices. Then, \\ (i) $\chi_{CF}^{CN}(G) = O(\ln\Delta)$, and (ii) if $G$ has no isolated vertices, $\chi_{CF}^{ON}(G) = O(\ln\Delta)$. 
\end{corollary}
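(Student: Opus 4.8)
The plan is to read off both parts of the corollary as the special case $k = 3$ of the results already in hand. A claw-free graph is, by definition, a $K_{1,3}$-free graph, so it satisfies the hypotheses of Theorem~\ref{thm_main} and Corollary~\ref{cor_closed_neighborhood} with the parameter $k$ equal to the fixed constant $3$. No new combinatorial machinery is needed; the entire task is an instantiation.

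For part (ii), I would apply Theorem~\ref{thm_main} directly. Since $G$ is $K_{1,3}$-free with no isolated vertices, the theorem yields $\chi_{CF}^{ON}(G) = O(k^2 \ln \Delta)$ with $k = 3$; as $k$ is now a constant, the factor $k^2 = 9$ is subsumed by the asymptotic notation, giving $\chi_{CF}^{ON}(G) = O(\ln \Delta)$. For part (i), I would invoke Corollary~\ref{cor_closed_neighborhood} with $k = 3$ to obtain $\chi_{CF}^{CN}(G) = O(9 \ln \Delta) = O(\ln \Delta)$. Equivalently, since $G$ has no isolated vertices, part (i) follows at once from part (ii) together with the inequality $\chi_{CF}^{CN}(G) \le 2\,\chi_{CF}^{ON}(G)$ recorded in the paragraph just before the corollary; should one wish to drop the no-isolated-vertices hypothesis in the closed-neighborhood setting, each isolated vertex can be assigned a single fresh color, which perturbs the bound by only an additive constant.

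Since all the combinatorial content is already established in Theorem~\ref{thm_main}, I do not expect any genuine obstacle here: the only point to confirm is that, with $k$ fixed at $3$, the $k^2$ factor is legitimately absorbed into the $O(\cdot)$ notation. I anticipate the full argument to occupy only a couple of lines.
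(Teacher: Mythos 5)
Your proposal is correct and matches the paper's own (implicit) argument exactly: the corollary is obtained by instantiating Theorem~\ref{thm_main} and Corollary~\ref{cor_closed_neighborhood} at $k=3$, absorbing the constant factor $k^2=9$ into the $O(\cdot)$ notation, with part (i) alternatively following from part (ii) via the inequality $\chi_{CF}^{CN}(G) \le 2\,\chi_{CF}^{ON}(G)$ stated just before the corollary.
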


\section{Preliminaries}
Below we state the Local Lemma due to Erd\H{o}s and Lov{\'a}sz \cite{lovaszlocallemma} that will be used in some of the proofs in  the paper. 
\begin{lemma}[\emph{The Local Lemma}, \cite{lovaszlocallemma}] \label{lem:local} Let $A_1, \ldots , A_n$ be events in an arbitrary probability space. Suppose that each event $A_i$ is mutually independent of a set of all the other events $A_j$ but at most $d$, and that $Pr[A_i] \leq p$ for all $i \in [n]$. If  $$4pd \leq 1,$$ then $Pr[\cap _{i=1}^n \overline{A_i}] > 0$.  
\end{lemma}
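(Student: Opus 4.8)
The plan is to deduce the positivity of $Pr[\bigcap_{i=1}^n \overline{A_i}]$ from a uniform bound on conditional probabilities of the form $Pr[A_i \mid \bigcap_{j\in S}\overline{A_j}]$, and then to expand the target probability as a telescoping product. Concretely, I would first prove the following claim: for every index $i$ and every set $S\subseteq [n]\setminus\{i\}$ we have $Pr[A_i \mid \bigcap_{j\in S}\overline{A_j}]\le 2p$. Granting this claim, the conclusion follows by writing $Pr[\bigcap_{i=1}^n\overline{A_i}]=\prod_{i=1}^n Pr[\overline{A_i}\mid \bigcap_{j<i}\overline{A_j}]=\prod_{i=1}^n\big(1-Pr[A_i\mid\bigcap_{j<i}\overline{A_j}]\big)\ge (1-2p)^n$, which is strictly positive because $4pd\le 1$ with $d\ge 1$ forces $p\le 1/4$, hence $2p\le 1/2<1$.

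The heart of the argument is the claim, which I would establish by induction on $|S|$. The base case $S=\emptyset$ is immediate, since $Pr[A_i]\le p\le 2p$. For the inductive step, I would partition $S$ into the set $S_1$ of indices $j$ for which $A_j$ lies in the dependency set of $A_i$, and the complement $S_2=S\setminus S_1$, so that $A_i$ is mutually independent of $\{A_j : j\in S_2\}$ and $|S_1|\le d$. Conditioning on $\bigcap_{k\in S_2}\overline{A_k}$ throughout, I would express $Pr[A_i\mid\bigcap_{j\in S}\overline{A_j}]$ as the ratio of $Pr[A_i\cap\bigcap_{j\in S_1}\overline{A_j}\mid\bigcap_{k\in S_2}\overline{A_k}]$ over $Pr[\bigcap_{j\in S_1}\overline{A_j}\mid\bigcap_{k\in S_2}\overline{A_k}]$. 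The numerator is at most $Pr[A_i\mid\bigcap_{k\in S_2}\overline{A_k}]=Pr[A_i]\le p$, where the equality is exactly the mutual independence of $A_i$ from the events indexed by $S_2$.

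For the denominator I would apply a union bound and then invoke the induction hypothesis. Namely, $Pr[\bigcap_{j\in S_1}\overline{A_j}\mid\bigcap_{k\in S_2}\overline{A_k}]\ge 1-\sum_{j\in S_1}Pr[A_j\mid\bigcap_{k\in S_2}\overline{A_k}]$, and since $|S_2|<|S|$ and $j\notin S_2$ for each $j\in S_1$, every summand is at most $2p$ by the induction hypothesis; this yields a lower bound of $1-2p\,|S_1|\ge 1-2pd\ge 1/2$, using $|S_1|\le d$ and $2pd\le 1/2$. Dividing numerator by denominator gives $Pr[A_i\mid\bigcap_{j\in S}\overline{A_j}]\le p/(1/2)=2p$, closing the induction. (If $S_1=\emptyset$ the ratio collapses to $Pr[A_i\mid\bigcap_{k\in S_2}\overline{A_k}]=Pr[A_i]\le p\le 2p$ directly, so one may assume $S_1\neq\emptyset$ and hence $|S_2|<|S|$ in the inductive step.)

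The main obstacle is the inductive claim rather than the final product. One must split $S$ correctly so that mutual independence handles the ``independent'' part $S_2$ while only the bounded-size dependent part $S_1$ is controlled through the inductive hypothesis, and one must ensure the denominator never degenerates. This is precisely where the hypothesis $4pd\le 1$ enters: it keeps the denominator at least $1/2$, and it is what calibrates the factor $2$ in the target bound $2p$ so that the induction is self-sustaining rather than blowing up across the levels of conditioning.
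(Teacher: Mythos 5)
Your proof is correct, and it is the classical inductive argument for the symmetric Local Lemma: bound $Pr[A_i \mid \bigcap_{j \in S}\overline{A_j}] \leq 2p$ by induction on $|S|$, splitting $S$ into the dependent part $S_1$ (controlled by the union bound and the inductive hypothesis, using $|S_1| \leq d$ and $2pd \leq 1/2$) and the independent part $S_2$ (handled by mutual independence), then telescope to get $Pr[\bigcap_{i=1}^n \overline{A_i}] \geq (1-2p)^n > 0$. Note that the paper itself gives no proof of this lemma --- it is quoted from Erd\H{o}s and Lov\'asz \cite{lovaszlocallemma} and used as a black box --- so there is nothing in-paper to compare against; your argument is the standard one. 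Two small points you leave implicit: the conditional probabilities are well defined because the same induction shows every event $\bigcap_{j\in S}\overline{A_j}$ has probability at least $(1-2p)^{|S|} > 0$, and in the degenerate case $d = 0$ the hypothesis $4pd \leq 1$ places no constraint on $p$, so one should either assume $d \geq 1$ (as you do) or note that $d=0$ with $p<1$ follows directly from independence.
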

Below we state a version of the Talagrand's Inequality from \cite{MolloyR14}.
\begin{theorem}[Talagrand's Inequality, \cite{MolloyR14}]
\label{thm_Talagrand}
Let $X$ be a non-negative random variable determined by the independent trials $T_1, \ldots , T_n$. Suppose that for every set of possible outcomes of the trials, we have:
\\(i) changing the outcome of any one trial can affect $X$ by at most $a$; and
\\(ii) for each $s>0$, if $X \geq s$ then there is a set of at most $bs$ trials whose outcomes certify that $X \geq s$.
\\Then for any $t \geq 0$, we have
$$Pr[|X - E[X]| > t + 20a\sqrt{b E[X]} + 64a^2b] \leq 4e^{-\frac{t^2}{8a^2b(E[X] + t)}}.$$
\end{theorem}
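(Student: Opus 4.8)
The plan is to derive this combinatorial statement from the abstract isoperimetric inequality in product spaces that underlies Talagrand's inequality; the hypotheses (i) and (ii) serve only to translate geometry into statements about $X$. Let $\Omega = \prod_{i=1}^n \Omega_i$ carry the product measure $\mu$ induced by the independent trials $T_1,\dots,T_n$, and for $x \in \Omega$ and measurable $A \subseteq \Omega$ recall Talagrand's \emph{convex distance}
$$d_T(x,A) \;=\; \sup_{\substack{\alpha \in \mathbb{R}^n,\ \alpha \ge 0 \\ \|\alpha\|_2 \le 1}}\ \inf_{y \in A}\ \sum_{i:\, x_i \neq y_i} \alpha_i.$$
The engine of the whole argument is the bound
$$\mu(A)\cdot \mu\bigl(\{x : d_T(x,A) \ge r\}\bigr) \le e^{-r^2/4} \qquad (\star),$$
valid for every measurable $A$ and every $r \ge 0$. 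Once $(\star)$ is in hand, the theorem reduces to a clever choice of $A$ together with bookkeeping.

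To prove $(\star)$ I would establish the stronger integral inequality $\int_\Omega e^{d_T(x,A)^2/4}\, d\mu(x) \le 1/\mu(A)$ and then apply Markov's inequality to the non-negative random variable $e^{d_T(\cdot,A)^2/4}$. The integral bound is proved by induction on the number of coordinates $n$: the base case $n=1$ is a direct verification, while the inductive step writes $\Omega = \Omega' \times \Omega_n$, conditions on the last coordinate, relates $d_T(\cdot,A)$ to the convex distances to the two sections of $A$, and then combines the two resulting estimates using Hölder's inequality, closing the recursion with the elementary one-variable convexity optimization that governs how the convex distance splits across the last coordinate. This induction, resting on the geometric characterization of $d_T(x,A)$ as the Euclidean distance from the origin to the convex hull of the disagreement vectors $\{(\mathbb{1}[x_i \neq y_i])_i : y \in A\}$, is the genuinely deep and delicate part; everything afterwards is deduction.

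The second step translates (i) and (ii) into a lower bound on $d_T$. Fix a threshold $v$, set $A = \{y : X(y) \le v\}$, and take any $x$ with $X(x) = s > v$; by (ii) there is a certificate set $I$ for the event $X \ge s$ with $|I| \le bs$. For any $y \in A$, let $y'$ agree with $x$ on $I$ and with $y$ elsewhere; then $X(y') \ge s$ by the certificate, while (i) gives $X(y') \le X(y) + a\,|\{i \in I : x_i \neq y_i\}| \le v + a\,|\{i \in I : x_i \neq y_i\}|$. Hence $y$ disagrees with $x$ on at least $(s-v)/a$ coordinates of $I$, and testing with $\alpha$ uniform on $I$ (i.e.\ $\alpha_i = |I|^{-1/2}$ for $i \in I$) yields $d_T(x,A) \ge (s-v)/(a\sqrt{bs})$. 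As this bound is increasing in $s$, we get $\{X \ge s\} \subseteq \{x : d_T(x,A) \ge (s-v)/(a\sqrt{bs})\}$, and the convex-hull characterization of $d_T$ supplies the analogous estimate for $A = \{y : X(y) \ge v\}$, which controls the lower tail.

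Plugging these inclusions into $(\star)$ with $v$ equal to the median $m$ of $X$ (so $\mu(A) \ge 1/2$) gives the one-sided bounds $Pr[X \ge s] \le 2\exp\bigl(-(s-m)^2/(4a^2 b s)\bigr)$ and a symmetric estimate for $Pr[X \le s]$. The final, and main, remaining obstacle is to convert these median-centred one-sided bounds into the single two-sided inequality centred at $E[X]$ with its explicit correction $20a\sqrt{bE[X]} + 64a^2 b$ and exponent $-t^2/(8a^2 b(E[X]+t))$. I would bound $|E[X]-m|$ by integrating the tail estimates, which is the source of the additive $a\sqrt{bE[X]}$ and $a^2 b$ terms, and then re-express the exponent $(s-m)^2/(4a^2 b s)$ in the deviation $t = |s - E[X]|$; retaining $s$ rather than crudely replacing it by $E[X]$ in the denominator is exactly what produces the $E[X]+t$ (hence validity for all $t \ge 0$) instead of the weaker $E[X]$. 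The difficulty here is not conceptual but the careful propagation of the numerical constants $8$, $20$, $64$ through the median-to-mean passage and the tail integration so that they match the stated bound; all the probabilistic substance is contained in $(\star)$ and the convex-distance translation of the preceding step.
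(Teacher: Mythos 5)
The paper does not actually prove Theorem~\ref{thm_Talagrand}: it is quoted from Molloy and Reed \cite{MolloyR14} and used as a black box, so there is no internal proof to compare against. Your outline reproduces the standard derivation found in that source and its corrections: the convex-distance isoperimetric inequality $(\star)$, proved via the exponential-moment bound $\int e^{d_T(x,A)^2/4}\,d\mu \le 1/\mu(A)$ by induction on coordinates with H\"older's inequality; the translation of hypotheses (i) and (ii) into the lower bound $d_T(x,A) \ge (s-v)/(a\sqrt{bs})$ using certificates (your computation of this step --- modifying $y\in A$ on the certificate set $I$ and counting forced disagreements --- is correct and is exactly the standard argument); the median-centred tail bounds; and the median-to-mean conversion that produces the additive terms $20a\sqrt{bE[X]}+64a^2b$, the prefactor $4$, and the $E[X]+t$ in the exponent. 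Two caveats keep this a roadmap rather than a proof. First, the genuinely deep step --- the inductive proof of $(\star)$ --- is only gestured at (``one-variable convexity optimization''), and you rightly identify it as the hard core; nothing in your sketch would let a reader reconstruct it. Second, the deferred ``careful propagation of the numerical constants'' in the median-to-mean passage is precisely where this inequality has historically gone wrong: the version in the original printing of \cite{MolloyR14} was flawed for small $E[X]$, and the statement used in this paper, with the extra $64a^2b$ term and the $E[X]+t$ in the denominator of the exponent, is the corrected form. So that bookkeeping cannot be waved through --- it is the reason this particular statement looks the way it does --- even though your structural plan for it (integrate the tails to bound $|E[X]-m|$, keep $s$ rather than $E[X]$ in the denominator) is the right one.
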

The following theorem on conflict-free coloring of hypergraphs is from \cite{Pach2009}. The degree of a vertex in a hypergraph is the number of hyperedges it is part of. 
\begin{theorem}[Theorem 1.1(b) in \cite{Pach2009}]
\label{thm_pach_tardos}
Let $\mathcal{H}$ be a hypergraph and let $\Delta$ be the maximum degree of any vertex in $\mathcal{H}$. Then, $\chi_{CF}(\mathcal{H}) \leq \Delta + 1$.  
\end{theorem}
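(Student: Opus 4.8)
The plan is to prove the bound constructively by a single-pass greedy (first-fit) coloring that uses the palette $[\Delta+1]$ and never gets stuck. The guiding idea is that, rather than asking each vertex to certify conflict-freeness of all the edges it completes, I would pre-commit one \emph{representative} vertex per hyperedge and arrange the coloring so that each representative ends up with a color that is unique inside its edge. This reduces the constraints imposed on any single vertex from ``avoid every color occurring in every edge through it'' (which could be far larger than $\Delta$) down to ``avoid one color per edge through it'', and the latter is controlled by the degree.

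Concretely, I would fix an arbitrary ordering $v_1, \dots, v_n$ of the vertices and, for every hyperedge $E$, let $\mathrm{rep}(E)$ denote the vertex of smallest index in $E$. I then color $v_1, \dots, v_n$ in this order. When coloring $v_i$, I declare a color \emph{forbidden} if it equals $c(\mathrm{rep}(E))$ for some hyperedge $E$ that contains $v_i$ but whose representative is not $v_i$; note that each such $\mathrm{rep}(E)$ has index smaller than $i$ and is therefore already colored. I assign to $v_i$ any non-forbidden color from $[\Delta+1]$. The counting step is then immediate: each forbidden color is charged to a distinct hyperedge containing $v_i$ (one in which $v_i$ is not the representative), so at most $\deg(v_i) \le \Delta$ colors are forbidden, leaving at least one color available in a palette of size $\Delta+1$. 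Hence the procedure never fails.

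For correctness I would verify that the resulting coloring $c$ is conflict-free. Fix a hyperedge $E$ and let $r = \mathrm{rep}(E)$; I claim $c(r)$ occurs exactly once in $E$. Indeed, any other $u \in E$ has index larger than that of $r$, since $r$ has minimum index in $E$, so at the moment $u$ was colored the edge $E$ witnessed $u \neq \mathrm{rep}(E)$, which made $c(r)$ a forbidden color for $u$; thus $c(u) \neq c(r)$. Therefore $r$ is uniquely colored within $E$, which is exactly the conflict-free condition, and since $E$ was arbitrary the coloring is a conflict-free coloring with $\Delta+1$ colors.

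I expect the only real obstacle to be conceptual rather than computational: the tempting alternative of processing edges ``by their last vertex'' and forcing that vertex to take a fresh color requires avoiding up to $|E|-1$ colors per edge and does not respect the degree bound. The decisive move is the asymmetric choice of the \emph{first} vertex of each edge as its representative, so that all the constraints an edge generates land on its later vertices as a \emph{single} forbidden color. That is precisely what keeps the per-vertex budget at $\Delta$ and yields the palette size $\Delta+1$.
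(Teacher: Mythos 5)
Your proof is correct, and there is no internal proof to compare it against: the paper imports this statement verbatim from Pach and Tardos without reproducing an argument, and your minimum-index-representative greedy is essentially the standard first-fit proof of exactly this bound, with the counting sound (each hyperedge through the current vertex forbids only the single color of its already-colored representative, so at most $\Delta$ of the $\Delta+1$ colors are excluded) and the uniqueness of $c(\mathrm{rep}(E))$ within each edge $E$ verified correctly. The only implicit hypothesis worth flagging is that all hyperedges must be nonempty (an empty edge can never satisfy the conflict-free condition), which holds wherever the paper applies the theorem.
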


\section{Proof of our results}
\begin{lemma}
\label{lem_near_uniform_hypergraph}
Let $\mathcal{H} = (V,\mathcal{E})$ be a hypergraph where (i) every hyperedge intersects with at most $\Gamma$ other hyperedges, and (ii) for every hyperedge $E \in \mathcal{E}$, $r \leq |E| \leq cr$, where $c \geq 1$ is some integer and $r = \max(2^{12}, \lceil 136 \ln(16\Gamma)\rceil)$. Then, 
$$\chi_{CF}(\mathcal{H}) \leq 32cr.$$
\end{lemma}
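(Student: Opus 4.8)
My plan is to exhibit a conflict-free coloring with $32cr$ colors via the probabilistic method, using the Local Lemma (Lemma \ref{lem:local}). Color each vertex of $\mathcal{H}$ independently and uniformly at random from the palette $[32cr]$, and set $k := 32cr$. For each hyperedge $E$, let $A_E$ be the bad event that no color appears exactly once in $E$. Since $A_E$ depends only on the colors assigned to the vertices of $E$, it is mutually independent of every $A_{E'}$ with $E'\cap E=\emptyset$; by hypothesis~(i) at most $\Gamma$ hyperedges meet $E$, so in the Local Lemma I may take $d=\Gamma$. It then suffices to prove $\Pr[A_E]\le \tfrac{1}{4\Gamma}$ for every $E$, for then $4\Pr[A_E]\,\Gamma\le 1$ and Lemma \ref{lem:local} gives a coloring avoiding all the $A_E$, i.e.\ a conflict-free coloring of $\mathcal{H}$ with $32cr$ colors.

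The core task is the tail bound $\Pr[A_E]\le\tfrac1{4\Gamma}$, which I would get from Talagrand's inequality (Theorem \ref{thm_Talagrand}). Fix $E$ and write $m:=|E|\in[r,cr]$. Call a vertex of $E$ \emph{lonely} if no other vertex of $E$ shares its color, and let $W$ be the number of \emph{non-lonely} vertices. The key reformulation is that $A_E$ occurs precisely when \emph{every} vertex of $E$ is non-lonely, i.e.\ $A_E=\{W=m\}$, so I need an upper-tail bound on $W$. Because $k=32cr\ge 32m$, a one-line estimate gives $\mathbb{E}[W]=m\bigl(1-(1-\tfrac1k)^{m-1}\bigr)\le \tfrac{m^2}{k}\le \tfrac{m}{32}$, so the mean of $W$ sits far below $m$ and $\{W=m\}$ is a genuine large-deviation event.

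I would apply Theorem \ref{thm_Talagrand} to $W$, taking the $m$ vertex-colors of $E$ as the independent trials. Condition~(i) holds with $a=2$: a short case analysis shows that recoloring one vertex can flip its own lonely/non-lonely status and that of at most one further vertex (in its old or new color class), changing $W$ by at most $2$. The decisive point, and the reason $W$ counts \emph{non}-lonely vertices rather than lonely ones, is that condition~(ii) holds with $b=2$: if $W\ge s$, then $s$ non-lonely vertices together with one equal-colored witness for each form a set of at most $2s$ trials whose outcomes already force $W\ge s$; loneliness, by contrast, cannot be certified by a bounded number of trials. Feeding $a=b=2$ and $\mathbb{E}[W]\le m/32$ into the inequality and choosing $t$ so that the deviation threshold $\mathbb{E}[W]+t+20a\sqrt{b\,\mathbb{E}[W]}+64a^2b$ equals $m$, I obtain $\Pr[W=m]\le 4e^{-\Omega(m)}$; the constraint $r\ge 2^{12}$ makes the additive lower-order terms negligible against $m$, and $r\ge 136\ln(16\Gamma)$ together with $m\ge r$ pushes this below $\tfrac1{4\Gamma}$.

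The main obstacle is precisely this last step: checking the Talagrand hypotheses with the small explicit constants $a=2,\ b=2$ and then tracking the constants through the deviation threshold tightly enough that the resulting exponent $\Omega(m)\ge\Omega(r)$ clears $\ln(16\Gamma)$. The peculiar constants $2^{12}$ and $136$ in the statement are exactly the slack needed to absorb the $20a\sqrt{b\,\mathbb{E}[W]}+64a^2b$ correction and to turn $4e^{-\Omega(r)}$ into something at most $\tfrac1{4\Gamma}$. Once this bound is in hand, the random coloring, the dependency count $d=\Gamma$, and the final appeal to the Local Lemma are all routine.
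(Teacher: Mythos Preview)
Your proposal is correct and mirrors the paper's proof almost exactly: the same random coloring with $32cr$ colors, the same random variable counting non-lonely vertices, Talagrand's inequality with $a=b=2$, and the Local Lemma with $d=\Gamma$. The paper makes the specific choice $t=|E|/2$ (rather than solving for $t$) and then verifies that $\mathbb{E}[X_E]+\tfrac{|E|}{2}+40\sqrt{2\mathbb{E}[X_E]}+512<|E|$ using $|E|\ge 2^{12}$, but this is just a concrete instantiation of the step you describe; one small caveat is that recoloring a vertex can in fact flip the status of up to two \emph{other} vertices (one in the old color class, one in the new), though the net change to $W$ is still at most $2$, so $a=2$ remains valid.
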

\begin{proof}
For each vertex in $V$, assign a color that is chosen independently and uniformly at random from a set of $32cr$ colors. We will first use 
Talagrand's Inequality to show that the probability of this coloring being bad for an edge is small, and then use Local Lemma to show the existence of conflict-free coloring for $\mathcal H$ using at most $32cr$ colors.

Consider a hyperedge $E \in \mathcal{E}$. Let $X_E$ be a random variable that denotes the number of vertices in $E$ whose color is also assigned to some other vertex in $E$.  For a vertex in $E$, the probability that its color is also assigned to some other vertex in $E$ is at most $1 - (1 - \frac{1}{32cr})^{|E|-1} \leq 1 - (1 - \frac{|E|-1}{32cr}) \leq \frac{|E|}{32cr}$. Then, by linearity of expectation,  $E[X_E] \leq \frac{|E|^2}{32cr} \leq \frac{|E|}{32}$. We claim that the random variable $X_E$ satisfies the assumptions of Theorem \ref{thm_Talagrand} (Talagrand's Inequality) with $n=|E|$, $a=2$, and $b=2$. The value of $X_E$ is determined by $|E|$ independent trials. Changing the outcome of any trial can affect $X_E$ by at most $2$. For any $s>0$, if it is given that $X_E \geq s$, then there is a set of at most $2s$ trials whose outcomes would ensure that $X_E$ is at least $s$, regardless of the outcomes of the remaining trials. This proves our claim. Let $A = \frac{|E|}{2} + 20a\sqrt{bE[X]}  + 64a^2b = \frac{|E|}{2} + 40\sqrt{2E[X]}  + 512$. Applying Theorem \ref{thm_Talagrand} with $t = |E|/2$, we get 
\begin{eqnarray*}
Pr[|X_E - E[X_E]| > A ] & \leq & 4e^{-\frac{|E|^2/4}{64(E[X_E] + |E|/2)}} \\
 & \leq & 4e^{-\frac{|E|^2}{256(17|E|/32)}}~~(\mbox{since }E[X_E] \leq \frac{|E|}{32}) \\
 & \leq & 4e^{-\frac{|E|}{136}} \\
 & < & 4e^{-\ln (16 \Gamma)}~~(\mbox{since } |E| \geq r \geq \lceil 136 \ln(16 \Gamma)\rceil) \\  
  & = & \frac{1}{4\Gamma}.
\end{eqnarray*}
Since $E[X_E] \leq \frac{|E|}{32}$, we have $E[X_E] + A \leq \frac{17|E|}{32} + 40\sqrt{\frac{2|E|}{32}} + 512 =   \frac{17|E|}{32} + 10\sqrt{|E|} + 512 < |E|$, since it is given that $|E| \geq r \geq 2^{12}$. Thus, $Pr[X_E = |E|] = Pr[X_E - |E| \geq 0] \leq Pr[X_E - (E[X_E] + A) \geq 0] = Pr[(X_E - E[X_E]) \geq A] \leq Pr[|X_E - E[X_E]| \geq A] \leq \frac{1}{4\Gamma}$. 

Let $A_E$ denote the bad event that $X_E = |E|$. From the above calculations, we know that $Pr[A_E] \leq \frac{1}{4\Gamma}$. 
We can apply the Local Lemma (Lemma \ref{lem:local}) on the events $A_E$ for all hyperedges $E \in \mathcal E$.
Since each hyperedge intersects with at most $\Gamma$ other hyperedges, and $4\cdot \frac{1}{4\Gamma}\cdot \Gamma \leq 1$,  
we get $Pr[\cap_{E \in \mathcal{E}}(\overline A_E)] > 0$. That is, there is a conflict free coloring of $\mathcal{H}$ using at most $32cr$ colors. This completes the proof of the lemma. 
\end{proof}

\begin{lemma}
\label{lem_CFON_planar_trick}
Let $G$ be a graph with (i) $V(G) = X \uplus Y$, $X, Y \neq \emptyset$, (ii) every vertex in $G$ has  at most $d_X$ neighbors in $X$, (iii) every vertex in $Y$ has at least one neighbor in $X$, and (iv) every vertex in $X$ has at most $d_Y$ neighbors in $Y$. Then, there is a coloring of the vertices of $X$ with $d_Xd_Y + d_X - d_Y + 1$ colors such that every vertex in $Y$ sees some color exactly once among its neighbors in $X$. 
\end{lemma}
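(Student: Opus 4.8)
The plan is to reduce the statement to an ordinary proper vertex-coloring of a suitable auxiliary graph on $X$, and then color that graph greedily. For each $y \in Y$ write $S_y := N_G(y) \cap X$. By hypothesis (iii) we have $S_y \neq \emptyset$, and applying hypothesis (ii) to the vertex $y$ gives $|S_y| \le d_X$. The key observation is that it suffices to produce a coloring of $X$ under which every set $S_y$ is \emph{rainbow}, i.e.\ all of its vertices receive pairwise distinct colors: if $S_y$ is rainbow and nonempty, then each of its vertices carries a color that appears exactly once in $S_y$, so $y$ sees some color exactly once among its neighbors in $X$.

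To force all the sets $S_y$ to be rainbow simultaneously, I would build an auxiliary graph $H$ on the vertex set $X$ in which two distinct vertices $x, x'$ are joined whenever (a) $xx' \in E(G)$, or (b) $x$ and $x'$ have a common neighbor in $Y$ (equivalently, $x, x' \in S_y$ for some $y \in Y$). Any proper coloring of $H$ makes every $S_y$ rainbow, since by (b) the vertices lying in a common $S_y$ are pairwise adjacent in $H$ and hence receive distinct colors; by the previous paragraph such a coloring satisfies the lemma. Note that only the edges of type (b) are strictly needed for this conclusion, and using them alone would in fact give a slightly smaller bound; the type-(a) edges are included so that the resulting coloring is in addition proper on $G[X]$, and it is precisely these extra edges that account for the stated constant.

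It then remains to bound the maximum degree of $H$ and invoke the greedy bound. Fix $x \in X$. Its type-(a) neighbors are its neighbors inside $X$, of which there are at most $d_X$ by hypothesis (ii). For the type-(b) neighbors, observe that $x$ has at most $d_Y$ neighbors in $Y$ by hypothesis (iv), and each such $y$ contributes at most $|S_y| - 1 \le d_X - 1$ further vertices of $X$ to the neighborhood of $x$; hence there are at most $d_Y(d_X - 1)$ type-(b) neighbors. Summing, the degree of $x$ in $H$ is at most $d_X + d_Y(d_X - 1) = d_X d_Y + d_X - d_Y$. A graph of maximum degree $D$ is properly colorable with $D + 1$ colors, so $H$ admits a proper coloring with $d_X d_Y + d_X - d_Y + 1$ colors, which is the desired coloring of $X$.

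The argument is essentially bookkeeping, so I expect no serious obstacle; the only points requiring care are (1) that the reduction to rainbow sets genuinely relies on hypothesis (iii) to guarantee $S_y \neq \emptyset$ (otherwise a vertex $y$ with no neighbor in $X$ would have no color to witness), and (2) getting the degree count of $H$ exactly right — in particular using hypothesis (ii) to bound $|S_y| \le d_X$ and hypothesis (iv) to bound the number of sets $S_y$ through a fixed $x$ by $d_Y$ — so that the constant matches the claimed $d_X d_Y + d_X - d_Y + 1$.
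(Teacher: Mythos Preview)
Your proof is correct and follows essentially the same strategy as the paper: build an auxiliary graph on $X$ whose maximum degree is at most $d_X d_Y + d_X - d_Y$ and then greedily properly color it. The only minor difference is that the paper picks one representative $f(y)\in S_y$ for each $y$ and contracts the edge $\{y,f(y)\}$ (so only $f(y)$ need receive a color distinct from the rest of $S_y$), whereas you turn each $S_y$ into a clique and force it to be fully rainbow; both constructions give the same degree bound and hence the same color count.
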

\begin{proof}
For each vertex $y\in Y$, we arbitrarily choose one of its neighbors in $X$. Let us call this neighbor $f(y)$.
For each $y \in Y$, contract the edges $\{y, f(y)\}$ to obtain a resulting graph $G_X$.
Note that the vertex set of $G_X$ is $V(G_X) = X$. The maximum degree of a vertex in the new graph $G_X$ is at most $(d_X - 1)d_Y + d_X$. Thus, we can do a proper coloring (such that no pair of adjacent vertices receive the same color) of $G_X$ using $d_Xd_Y + d_X - d_Y + 1$ colors. We note that this coloring of the vertices of $X$ satisfies our requirement: in the original graph $G$, 
for each $y \in Y$, the neighbor $f(y)$ is colored distinctly from all the other neighbors of $y$ in $X$.
\end{proof}

\begin{lemma}
\label{lem_CFON_hypergraph_degree}
Let $G$ be a graph with (i) $V(G) = X \uplus Y$, $X, Y \neq \emptyset$, (ii) every vertex in $Y$ has at most $t_X$ neighbors in $X$, and (iii) every vertex in $X$ has at least one neighbor in $Y$. Then, there  is a coloring of the vertices of $Y$ using at most $(t_X + 1)$ colors such that every vertex in $X$ sees some color exactly once among its neighbors in $Y$. 
\end{lemma}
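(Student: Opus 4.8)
The plan is to recast the statement as a conflict-free coloring of an auxiliary hypergraph built on the ground set $Y$, and then invoke Theorem \ref{thm_pach_tardos}. Concretely, I would define a hypergraph $\mathcal{H} = (Y, \mathcal{E})$ whose vertex set is $Y$, and which has one hyperedge $E_x = N_G(x) \cap Y$ for each vertex $x \in X$. By hypothesis (iii), every $x \in X$ has at least one neighbor in $Y$, so each $E_x$ is a nonempty hyperedge and $\mathcal{H}$ is well-defined. The key observation is that a conflict-free coloring of $\mathcal{H}$, in the sense of the opening definition, is precisely a coloring of the vertices of $Y$ in which every hyperedge $E_x$ contains some color appearing exactly once; restated in $G$, this says that every $x \in X$ sees some color exactly once among its neighbors in $Y$, which is exactly what the lemma asks for.

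The next step is to bound the maximum vertex degree of $\mathcal{H}$. The degree of a vertex $y \in Y$ in $\mathcal{H}$ is the number of hyperedges containing $y$, i.e. the number of vertices $x \in X$ with $y \in E_x$, which is just $|N_G(y) \cap X|$, the number of neighbors of $y$ in $X$. By hypothesis (ii), this is at most $t_X$ for every $y \in Y$, so the maximum degree of $\mathcal{H}$ is at most $t_X$.

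Having established these two facts, I would finish by applying Theorem \ref{thm_pach_tardos}, which guarantees $\chi_{CF}(\mathcal{H}) \leq \Delta + 1$ where $\Delta$ is the maximum vertex degree of $\mathcal{H}$. Since $\Delta \leq t_X$, we obtain a conflict-free coloring of $\mathcal{H}$ using at most $t_X + 1$ colors, which, by the translation above, is the desired coloring of $Y$.

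I do not expect any genuine obstacle here: the entire content is the dualization that turns "vertices of $X$ want a uniquely-seen color among their $Y$-neighbors" into "hyperedges indexed by $X$ want a uniquely-appearing color," after which the degree bound of Theorem \ref{thm_pach_tardos} applies verbatim. The only point that needs a moment of care is confirming that the hypergraph degree is computed from the $Y$-side (condition (ii) on neighbors in $X$), not from the size of the hyperedges; once that correspondence is made explicit, the argument is immediate and no hypotheses are wasted, since nonemptiness of hyperedges (iii) is needed merely so that $\mathcal{H}$ has no empty edges.
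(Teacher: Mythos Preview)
Your proposal is correct and matches the paper's proof essentially verbatim: the paper also builds the hypergraph $\mathcal{H}$ on vertex set $Y$ with hyperedges $N_G(x)\cap Y$ for $x\in X$, bounds its maximum degree by $t_X$ via hypothesis (ii), and applies Theorem~\ref{thm_pach_tardos}.
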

\begin{proof}
For every vertex $v \in X$, let $N_G^Y(v)$ denote the set $N_G(v) \cap Y$, i.e., the neighbors of $v$ in $Y$ in the graph $G$. Since every vertex in $X$ has at least one neighbor in $Y$, we have, $|N_G^Y(v)| \geq 1$. We construct a hypergraph $\mathcal{H} = (V, \mathcal{E})$ from $G$ as described below. We have (i) $V = Y$, and (ii) $\mathcal{E} = \{N_G^Y(v)~:~v \in X\}$. Since every vertex in $Y$ has at most $t_X$ neighbors in $X$ in the graph $G$, the maximum degree of a vertex in the hypergraph $\mathcal{H}$ (that is, the maximum number of hyperedges a vertex in $\mathcal{H}$ is part of) is at most $t_X$. From Theorem \ref{thm_pach_tardos}, we have $\chi_{CF}(\mathcal{H}) \leq t_X + 1$. Observe that in this coloring of the vertices of $Y$ using at most $(t_X + 1)$ colors, every vertex in $X$ sees some color exactly once among its neighbors in $Y$. 
\end{proof}


\subsection{Proof of Theorem \ref{thm_main}}
Consider a proper coloring (such that no pair of adjacent vertices receive the same color) of $G$, $h:V(G) \rightarrow [\Delta + 1]$, using $\Delta + 1$ colors. 
Let $C_1, C_2, \ldots , C_{\Delta+1}$ be the color classes given by this coloring $G$. That is, $V(G) = C_1 \uplus C_2 \uplus \cdots \uplus C_{\Delta + 1}$ is the partitioning of the vertex set of $G$ given by the coloring, where each $C_i$ is an independent set.  We may assume
that the coloring $h$ satisfies the following property:  for every $1 < i \leq \Delta + 1$, every vertex $v$ in $C_i$ has at least one neighbor in every $C_j$, where $1 \leq j < i$  (otherwise, we can move $v$ to a color class $C_j$, $j<i$, in which it has no neighbors without compromising on the `properness' of the coloring). Since $G$ is $K_{1,k}$-free, we have the following observation. 
\begin{observation}
\label{obv_degree_in_color_class}
For every $i \in [\Delta + 1]$, a vertex in $G$ has at most $k-1$ neighbors in $C_i$. 
\end{observation}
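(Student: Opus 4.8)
The plan is to argue by contradiction using the fact that each color class $C_i$ is an independent set. Suppose, for the sake of contradiction, that some vertex $v \in V(G)$ has at least $k$ neighbors lying in a single color class $C_i$. I would then select $k$ of these neighbors, say $u_1, u_2, \ldots, u_k \in C_i$, each adjacent to $v$.

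The key observation is that the $u_j$'s are pairwise non-adjacent. This is immediate from the fact that $h$ is a proper coloring: since $u_1, \ldots, u_k$ all receive color $i$, no two of them can be joined by an edge, for otherwise $h$ would assign the same color to two adjacent vertices. Consequently, the subgraph of $G$ induced on the vertex set $\{v, u_1, u_2, \ldots, u_k\}$ has exactly the edges $\{v, u_j\}$ for $j \in [k]$ and no others, which is precisely a copy of $K_{1,k}$ with $v$ as the center. Note that this is genuinely an \emph{induced} copy, since the only potential extra edges would be among the $u_j$'s, and we have just ruled these out.

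This contradicts the hypothesis that $G$ is $K_{1,k}$-free, and therefore no vertex can have $k$ or more neighbors in any single $C_i$; that is, every vertex has at most $k-1$ neighbors in $C_i$, as claimed. I do not anticipate a real obstacle here: the entire argument rests on the independence of the color classes guaranteeing that the candidate star is induced, so the only point requiring care is to invoke the properness of $h$ to establish non-adjacency among the chosen neighbors before concluding that an induced $K_{1,k}$ has been exhibited.
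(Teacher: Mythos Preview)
Your argument is correct and is exactly the reasoning the paper has in mind: the paper does not give an explicit proof but simply prefaces the observation with ``Since $G$ is $K_{1,k}$-free,'' which is precisely the contradiction you spell out using the independence of $C_i$.
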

Let $r = \max(2^{12}, \lceil 136\ln(16\Delta^2) \rceil )$. We partition the vertex set of $G$ into three parts, namely $V_1,V_2,$ and $V_3$ as described below. We have $V_1 := C_1$. If $\Delta > r$, then $V_2 := C_2 \uplus C_3 \uplus \cdots \uplus C_{r+1}$ and $V_3 := C_{r+2} \uplus C_{r+3} \uplus \cdots \uplus C_{\Delta + 1}$. Otherwise, $V_2 := C_2 \uplus C_3 \uplus \cdots \uplus C_{\Delta +1}$ and $V_3 := \emptyset$. 

The rest of the proof is about constructing a coloring $f:V(G)\rightarrow \mathbb{N} \times \mathbb{N}$ that is a CFON coloring of $G$. Let $N_1 = \{1, 2, \ldots , r_1\}$, 
$N_2 = \{r_1+1, r_1+2, \ldots , r_1+ r_2\}$, and 
$N_3 = \{r_1 + r_2+1, r_1 + r_2 +2, \ldots , r_1 + r_2 + r_3\}$, where 
$|N_1| = r_1 =  (k-1)(k-2)r + k$, 
$|N_2| = r_2 = 32(k-1)r$, 
 and $|N_3| = r_3 = k$. 
We define three colorings $f_1, f_2,$ and $f_3$ below.

We begin by describing the coloring $f_1:V_1\rightarrow N_1$. Let $G[V_1 \cup V_2]$ be the subgraph of $G$ induced on $V_1 \cup V_2$.   From Observation \ref{obv_degree_in_color_class}, every vertex in $G[V_1 \cup V_2]$ has at most $k-1$ neighbors in $V_1 = C_1$. Every vertex in $V_2$ has at least one neighbor in $V_1$ due to the property of our coloring $h$. From Observation \ref{obv_degree_in_color_class}, we can also say that every vertex in $V_1$ has at most $r(k-1)$ neighbors in $V_2$. Applying Lemma \ref{lem_CFON_planar_trick} on $G[V_1 \cup V_2]$ with $X = V_1$, $Y = V_2$, $d_X = k-1$ and $d_Y = r(k-1)$, we can say that there is a coloring $f_1:V_1 \rightarrow N_1$ of the vertices of $V_1$ with $(k-1)(k-2)r + k$ colors such that every vertex in $V_2$ sees some color exactly once among its neighbors in $V_1$.

We now describe the coloring $f_2:V_2\rightarrow N_2$. If $V_3 = \emptyset$, then, $\forall v \in V_2$, $f_2(v) = r_1 + 1$. Suppose $V_3 \neq \emptyset$. For a vertex $v$ in $G$, let $N_G^{V_2}(v)$ denote the set of neighbors of $v$ in $V_2$ in the graph $G$. We construct a hypergraph $\mathcal{H}_2 = (V_2, \mathcal{E}_2)$ as follows. We have $\mathcal{E}_2 = \{N_G^{V_2}(v)~:~v \in V_3\}$. Consider an arbitrary hyperedge $E \in \mathcal{E}_2$. In the graph $G$, since every vertex in $V_3$ has at least one neighbor in every color class $C_i$, $2 \leq i \leq r+1$, $|E| \geq r$.  Using Observation \ref{obv_degree_in_color_class}, we can say that $|E| \leq (k-1)r$. As $|N_G^{V_2}(v)| \leq N_G(v) \leq \Delta, \forall v \in V(G)$, we have $|E| \leq \Delta$. This also implies that $E$ intersects with at most $\Delta^2$ other hyperedges in $\mathcal E_2$. Applying Lemma \ref{lem_near_uniform_hypergraph} with $c = (k-1)$ and $\Gamma = \Delta^2$, we have $\chi_{CF}(\mathcal{H}_2) \leq 32(k-1)r$. Thus, there is a coloring $f_2:V_2 \rightarrow N_2$ of the vertices $V_2$ such that  every vertex in $V_3$ sees some color exactly once among its neighbors in $V_2$. 


Finally, we describe the coloring $f_3:V_2\cup V_3 \rightarrow N_3$. From Observation \ref{obv_degree_in_color_class}, every vertex in $V_2 \cup V_3$ has at most $k-1$ neighbors in $V_1 = C_1$. Since there are no isolated vertices in $G$, every vertex in $V_1$ has at least one neighbor in $V_2 \cup V_3$. Applying Lemma \ref{lem_CFON_hypergraph_degree} with $X = V_1$, $Y = V_2 \cup V_3$, and $t_X = k-1$, we get a coloring $f_3:V_2\cup V_3 \rightarrow N_3$ of the vertices of $V_2 \cup V_3$ using at most $k$ colors such that every vertex in $V_1$ sees some color exactly once among its neighbors in $V_2 \cup V_3$.    

We are now ready to define the coloring $f$. 
\begin{align*}
	f(v) = 
	\begin{cases}
	(1, f_1(v)) \text{, if $v \in V_1$}\\
	(f_2(v), f_3(v)) \text{, if $v \in V_2$}\\
	(1, f_3(v)) \text{, if $v \in V_3$}
	\end{cases}\;.
\end{align*}
We now argue that $f$ is indeed a CFON coloring of $G$. Consider a vertex $v \in V(G)$. If  $v \in V_3$, $v$ sees some color exactly once among its neighbors in $V_2$ under the coloring $f_2$. Let $u$ be that neighbor of $v$ in $V_2$ and $f_2(u)$ be that color that appears exactly once in the neighborhood of $v$ in $V_2$. Since the codomains of $f_1$, $f_2$, and $f_3$ are pairwise disjoint sets, $v$ does not see the same color among its neighbors in $V_1$ or in $V_2$. Further, since $f(u) = (f_2(u), f_3(u))$, the final coloring $f$ only refines the color classes of $V_2$ given by $f_2$.  Thus, the color $(f_2(u), f_3(u))$ appears exactly once among the neighbors of $v$ in $G$. The cases when $v \in V_1$ and $v \in V_2$ also follow using similar arguments. 

The coloring $f$ uses at most $|N_1|+ |N_2||N_3| + |N_3| = (k-1)(k-2)r + k + 32(k-1)kr + k$ colors. Since $r = O(\ln \Delta)$, this implies that  
$\chi_{CF}^{ON}(G) = O(k^2\ln\Delta)$.

\bibliographystyle{alpha}
\bibliography{bibfile}

\end{document}